\newtheorem{theorem}{Theorem}[section]
\theoremstyle{definition}
\theoremstyle{remark}
\newtheorem{remark}[theorem]{Remark}
\numberwithin{equation}{section}
\newcommand{\be}{\begin{equation}}
\newcommand{\ee}{\end{equation}}
\newcommand{\F}{\mathbb{F}}
\newcommand{\Fp}{\mathbb{F}_p}
\newcommand{\Z}{\mathbb{Z}}
\newcommand{\EFp}{E(\Fp)}
\begin{document}

% \title[short text for running head]{full title}
\title{Primality tests for $2^kn-1$ using elliptic curves}

%    Only \author and \address are required; other information is
%    optional.  Remove any unused author tags.

%    author one information
% \author[short version for running head]{name for top of paper}
\author{Yu Tsumura}
\address{Department of Mathematics, Purdue University
150 N. University Street, West Lafayette, IN 47907-2067
}

\email{ytsumura@math.purdue.edu}
\thanks{}

\subjclass[2010]{Primary 11y11; Secondary 11Y05}

\date{}

\dedicatory{}

\begin{abstract}
We propose some primality tests for $2^kn-1$, where $k$, $n \in \Z$, $ k\geq 2$ and $n$ odd.
There are several tests depending on how big $n$ is.
These tests are proved using properties of elliptic curves.
Essentially, the new primality tests are the elliptic curve version of the Lucas-Lehmer-Riesel primality test.

\end{abstract}

\maketitle

%    Text of article.

\section{Note}
An anonymous referee suggested that Benedict H. Gross already proved the same result about a primality test for Mersenne primes using elliptic curve in \cite{Gross}.

\section{Introduction.}
There are mainly two types of primality tests.
One of them applies to any integer and the other applies only to a special form of integer.
Usually the latter is faster than the former because of its additional information.
Among them, the Lucas-Lehmer primality test for Mersenne numbers $M_k=2^k-1$ is very fast.
The test uses a sequence $S_i$ defined by $S_0=4$ and $S_{i+1}=S_i^2-2$ for $i\geq1$.
The primality test is that $M_k$ is prime if and only if $M_k$ divides $S_{k-2}$.
For a proof, see for example \cite{Crandall-Pomerance}. Also see \cite{Brillhart-Lehmer-Selfridge} and \cite{Williams} for applications of the Lucas sequence for other primality tests.
There is also a generalization of this test called the Lucas-Lehmer-Riesel test which applies to integers of the form $2^kn-1$ with $n<2^k$ (see \cite{Riesel} and \cite{Williams}). 
This test also uses the sequence $S_i$ defined by the above recursion but with a different initial value $S_0$ depending $k$ and $n$.

In this paper we give several primality tests for integers of the form $2^kn-1$ using elliptic curves.
When $n$ is relatively small as in the Lucas-Lehmer-Riesel test, the primality test can be regarded as an analogue of the Lucas-Lehmer-Riesel test.
The new test also uses a sequence defined by recursion. 
For the initial value, we need to take a proper elliptic curve and a point on it.
This corresponds to  the choice of an initial value in the Lucas-Lehmer-Reisel test.
However, when the new test applies to Mersenne numbers $2^k-1$, there exist an  elliptic curve and a point on it which are independent of $k$.
 
Now let us define the sequence.
Let  $p \equiv 3 \pmod 4$ be a prime number and let $E$ be  an elliptic curve defined by $y^2=x^3-mx$ for some integer $m \not \equiv 0 \pmod p$.
Fix a point $Q=(x,y)\in E(\Fp)$ and denote $2^iQ=(x_i,y_i)$ for $i\geq 0$.
On this curve, multiplication of a point by $2$ is described as
\begin{equation}\label{equ:multi}
2(x,y)=\left(\frac{x^4+2mx^2+m^2}{4(x^3-mx)},yR(x) \right)=\left( \left(\frac{x^2+m}{2y}\right)^2, yR(x) \right)
\end{equation}
for some rational function $R(x)$.
(See Example 2.5, page 52 in \cite{Washington}).
Let us define a sequence $S_i$.
Let $S_0=x$ and $S_i=4(x_{i-1}^3-mx_{i-1})$ for $i\geq 1$, that is, $S_i$ is the denominator of $2^iQ$ when $i\geq 1$. 
Alternatively, we could omit a constant  $4$ in the definition of $S_i$.
We refer to this sequence as the sequence $S_i$ with the initial value $Q=(x,y)$, or with the initial value $x$.
Note that $S_i$ depends only on $x$ and $i$.
($S_i$ also depends on $m$. However, it will be clear from the context which $m$ is used.)

\section{Group structure of $E(\Fp)$.}

First, we analyze the structure of the group $E(\Fp)$, where $E$ is an elliptic curve defined by $y^2=x^3-mx$ for some integer $m \not \equiv 0 \pmod p$ and $p \equiv 3 \pmod 4$ is a prime number.
Assume $p+1=2^kn$, where $k\in \Z$, $k\geq 2$ and $n$ is an odd integer.

\begin{theorem}\label{thm:p+1}
In this context, $\#E(\Fp)=p+1$.
\end{theorem}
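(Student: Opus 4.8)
The plan is to prove this by a direct point count via quadratic character sums, exploiting the symmetry forced by $p \equiv 3 \pmod 4$. For each $x \in \Fp$ the number of $y \in \Fp$ with $y^2 = x^3 - mx$ equals $1 + \left(\frac{x^3-mx}{p}\right)$, where $\left(\frac{\cdot}{p}\right)$ denotes the Legendre symbol with the convention $\left(\frac{0}{p}\right)=0$. Adding the point at infinity, I would write
\begin{equation*}
\#\EFp = 1 + \sum_{x\in\Fp}\left(1 + \left(\frac{x^3-mx}{p}\right)\right) = p + 1 + \sum_{x\in\Fp}\left(\frac{x^3-mx}{p}\right).
\end{equation*}
Thus the theorem reduces to showing that the character sum $T := \sum_{x\in\Fp}\left(\frac{x^3-mx}{p}\right)$ vanishes.

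The key step is to use that $p \equiv 3 \pmod 4$ implies $\left(\frac{-1}{p}\right) = -1$, together with the fact that $f(x) = x^3 - mx$ is an odd function, $f(-x) = -f(x)$. Since $x \mapsto -x$ is a bijection of $\Fp$, reindexing the sum and then applying multiplicativity of the Legendre symbol gives
\begin{equation*}
T = \sum_{x\in\Fp}\left(\frac{-(x^3-mx)}{p}\right) = \left(\frac{-1}{p}\right)\sum_{x\in\Fp}\left(\frac{x^3-mx}{p}\right) = -T.
\end{equation*}
Hence $2T = 0$, and since $p$ is odd this forces $T = 0$, so $\#\EFp = p+1$.

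There is essentially no hard obstacle here; the argument is elementary once the character-sum reformulation is in place. The only points requiring a little care are the bookkeeping at the $x$ with $x^3 - mx = 0$ (each contributing a single affine point, correctly recorded by $\left(\frac{0}{p}\right)=0$) and the inclusion of the point at infinity, both of which are handled automatically by the formula above. I would also note in passing that this is the classical statement that $E$ is supersingular at every prime $p \equiv 3 \pmod 4$ with $p \nmid 2m$; the resulting identity $\#\EFp = p+1$ is exactly what makes the group order depend only on $p$, which is what the subsequent primality test exploits.
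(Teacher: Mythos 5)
Your proof is correct. The point-counting formula, the treatment of the zeros of $x^3-mx$ via the convention $\left(\frac{0}{p}\right)=0$, and the reindexing $x \mapsto -x$ combined with $\left(\frac{-1}{p}\right)=-1$ (valid exactly because $p \equiv 3 \pmod 4$) are all sound, and $2T=0$ over $\Z$ does force $T=0$ (your aside that $p$ is odd is not actually needed for that last step).

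The comparison with the paper is somewhat lopsided: the paper does not prove this statement at all, but simply cites Theorem 4.23 of Washington's \emph{Elliptic Curves}. So your write-up supplies what the paper outsources. It is worth noting that the argument behind Washington's theorem is essentially the one you gave: one pairs $x$ with $-x$ and uses that $-1$ is a quadratic non-residue, so that $x^3-mx$ and $-(x^3-mx)$ cannot both be nonzero squares, whence each pair $\{x,-x\}$ contributes exactly two points; your character-sum formulation is a cleaner packaging of the same idea. What your version buys is self-containedness and the explicit observation (which you make) that the identity $\#\EFp = p+1$ expresses supersingularity of $E$ at such $p$ and is the only property of $E$ the subsequent primality tests need at this stage; what the citation buys the paper is brevity and a pointer to the general structural results (Theorems 4.3 and 4.4 of Washington) that are reused in the proof of the group-structure theorem that follows.
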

\begin{proof} See  Theorem 4.23, page 115 in  \cite{Washington}.
\end{proof}

\begin{theorem}\label{thm:group-structure}
In this context,
\[E(\Fp) \cong    \Z_{2^kn}   \textrm{ or } \Z_{2} \oplus \Z_{2^{k-1}n}   \]
 depending on whether $m$ is a non-quadratic residue or a quadratic residue modulo $p$.

\end{theorem}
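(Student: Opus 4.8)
The plan is to combine the standard structure theorem for the group of rational points on an elliptic curve over a finite field with the constraint coming from the Weil pairing, and then to pin down the two possibilities by a direct count of the $2$-torsion. By Theorem~\ref{thm:p+1} we already know $\#\EFp = p+1 = 2^kn$, so everything reduces to determining the invariant factors of this abelian group.

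First I would invoke the structure theorem, which gives $\EFp \cong \Z_{d_1} \oplus \Z_{d_2}$ with $d_1 \mid d_2$ and $d_1 d_2 = p+1$. The crucial extra input is that the full $d_1$-torsion is then rational, so by the nondegeneracy of the Weil pairing the group $\mu_{d_1}$ of $d_1$-th roots of unity lies in $\Fp^{\times}$; hence $d_1 \mid p-1$. Combining this with $d_1 \mid d_1 d_2 = p+1$ yields $d_1 \mid \gcd(p+1, p-1)$. Since $p$ is odd, $p-1$ and $p+1$ are consecutive even integers, so $\gcd(p+1,p-1)=2$ and therefore $d_1 \in \{1,2\}$. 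This already forces the group to be either $\Z_{2^kn}$ (when $d_1 = 1$) or $\Z_2 \oplus \Z_{2^{k-1}n}$ (when $d_1 = 2$); note that the hypothesis $k \geq 2$ is exactly what makes $d_1 \mid d_2$ possible in the second case, since $n$ is odd.

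It remains to decide which case occurs, and here I would count the points of order dividing $2$, since $d_1 = 2$ precisely when $\EFp$ contains the full $2$-torsion (four such points) whereas $d_1 = 1$ when there are only two. A point $(x,y)$ has order dividing $2$ exactly when $y = 0$, that is, when $x^3 - mx = x(x^2-m)=0$. The root $x=0$ always contributes the point $(0,0)$, while the roots of $x^2 = m$ exist in $\Fp$ if and only if $m$ is a quadratic residue modulo $p$. Thus $m$ a quadratic residue gives the full $2$-torsion and $d_1=2$, whereas $m$ a non-quadratic residue gives only $(0,0)$ and $d_1 = 1$, which is exactly the claimed dichotomy.

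The main obstacle is the justification of $d_1 \mid p-1$: everything else is either the quoted structure theorem or an elementary quadratic-residue count. I would therefore state cleanly that rationality of all of $E[d_1]$ forces $\mu_{d_1} \subseteq \Fp^{\times}$ via the Weil pairing; once that is in hand, the coprimality $\gcd(p+1,p-1)=2$ does all the work, and the two subcases drop out immediately from the location of the $2$-torsion.
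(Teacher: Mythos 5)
Your proof is correct and follows essentially the same route as the paper: the structure theorem $\EFp \cong \Z_{d_1} \oplus \Z_{d_2}$, the fact that $d_1 \mid p-1$ (which is precisely the Weil-pairing result the paper cites from Washington), the computation $\gcd(p+1,p-1)=2$, and the identification of the two cases via rationality of the $2$-torsion according to whether $m$ is a quadratic residue. Your version is marginally tidier in that capping $d_1 \in \{1,2\}$ up front removes the paper's separate step of using $p \equiv 3 \pmod 4$ to force $\alpha = 1$ in the quadratic-residue case, but the substance is identical.
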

\begin{proof}

By Theorem \ref{thm:p+1}, we have $\#E(\Fp)=p+1=2^kn$. 
Hence $\EFp \cong \Z_{2^{\alpha}n_1} \oplus \Z_{2^{\beta}n_2}$ for some $n_1$, $n_2$, $\alpha$, $\beta \in \Z $ with $\alpha \leq \beta$ and $\alpha + \beta =k$ and $n_1|n_2$ and $n_1n_2=n$. 
However, in general, $2^{\alpha} n_1$ must divide $ p-1$ by the group structure of elliptic curves. 
(See    Theorem 4.3 and 4.4, page 98 in \cite{Washington}.)
Note that $\gcd(\#E(\Fp),p-1)=\gcd(p+1,p-1)=2$. Therefore $n_1=1$ and $n_2=n$.

If $m$ is a quadratic non-residue (with Jacobi notation, $\left(  \frac{m}{p}\right)=-1$), then only one root of $x^3-mx$ is in $\Fp$.
Hence $E[2] \not \subset \EFp$.
Therefore $\alpha =0$ and $\EFp \cong  \Z_{2^kn}$.

If $\left(  \frac{m}{p}\right)=1$, then $\sqrt{m} \in \Fp$.
Hence all the roots of $x^3-mx$ are in $\Fp$.
Hence $E[2] \subset \EFp$.
So $\alpha \geq1$.
Since $p-1 \equiv 2 \pmod p$ and $2^{\alpha} | p-1$, we have $\alpha=1$.
Therefore $E(\Fp) \cong \Z_{2} \oplus \Z_{2^{k-1}n}$.

\end{proof}

The next theorem is essential to choose an initial value.

\begin{theorem}\label{thm:main}
Let $p\equiv 3 \pmod 4$ be prime and let $E$ be  an elliptic curve defined by $y^2=x^3-mx$ for some integer $m$. 
Assume $p+1=2^kn$, where $k\in \Z$, $k\geq 2$ and $n$ is an odd integer.
Suppose $m$ is a quadratic non-residue modulo $p$.
If $Q=(x,y)\in E$ and $x$ is a quadratic non-residue, then $Q$ has order divisible by $2^k$ in the cyclic group $E(\Fp)\cong \Z_{2^kn}$.
\end{theorem}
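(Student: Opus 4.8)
The plan is to exploit the cyclic structure of $E(\Fp)$ together with the fact that the $x$-coordinate of any point in the image of the doubling map is a perfect square. First I would invoke Theorem \ref{thm:group-structure}: since $m$ is a quadratic non-residue, $E(\Fp) \cong \Z_{2^kn}$ is cyclic of order $2^kn$ with $n$ odd. In such a group the order of an element $Q$ is divisible by $2^k$ precisely when $Q$ does not lie in the unique subgroup of index $2$; and since $n$ is odd, that index-$2$ subgroup is exactly $2E(\Fp)=\{2P : P \in E(\Fp)\}$, the image of the doubling map. So the claim reduces to showing $Q \notin 2E(\Fp)$, i.e.\ that $Q$ is not twice any point.

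Next I would translate the arithmetic condition on $x$ into this group-theoretic statement. From the duplication formula \eqref{equ:multi}, if $P=(u,v) \in E(\Fp)$ with $2P \neq O$, then the $x$-coordinate of $2P$ equals $\left( \frac{u^2+m}{2v} \right)^2$, which is a square in $\Fp$. Hence every affine point of $2E(\Fp)$ has a quadratic-residue $x$-coordinate. By hypothesis $x$ is a quadratic non-residue, in particular nonzero and not a square, so $Q$ cannot be written as $2P$; therefore $Q \notin 2E(\Fp)$. Combining the two observations, $Q$ lies outside the index-$2$ subgroup, so its order has full $2$-adic valuation $k$, which gives $2^k \mid \mathrm{ord}(Q)$.

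The main step to get right is the group-theoretic equivalence in the first paragraph: one must verify that in $\Z_{2^kn}$ an element has order divisible by $2^k$ exactly when it is \emph{not} an even multiple of a generator, and then identify the set of even multiples with $2E(\Fp)$. Writing $Q=jG$ for a generator $G$, the $2$-adic valuation of $\mathrm{ord}(Q)$ is $k-\min(k,v_2(j))$, which equals $k$ iff $j$ is odd iff $Q \notin 2E(\Fp)$; this is where the hypothesis $k\geq 2$ and the oddness of $n$ are used to guarantee the subgroup $2E(\Fp)$ has index exactly $2$. The remaining input, that doubled points have square $x$-coordinates, is immediate from \eqref{equ:multi} and requires no further work.
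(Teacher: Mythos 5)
Your proof is correct and follows essentially the same route as the paper: reduce the claim to showing $Q \notin 2E(\Fp)$ via the cyclic structure from Theorem \ref{thm:group-structure}, then rule out $Q=2P$ because the duplication formula \eqref{equ:multi} forces doubled points to have square $x$-coordinates. The only difference is that you spell out the elementary group-theoretic step (the $2$-adic valuation computation for $Q=jG$) that the paper compresses into ``therefore $t$ is odd,'' which is a fine elaboration rather than a new idea.
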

\begin{proof}
Since $m$ is a quadratic non-residue, $E(\Fp)\cong\Z_{2^kn}$ by Theorem \ref{thm:group-structure}.
Hence $E(\Fp)$ is cyclic. Let $G$ be a generator of this group and let $tG=Q$ for an integer $t$.

We show that $Q=(x,y) \notin 2E(\Fp)$.
Suppose $(x,y)=2(x_0,y_0)$ for a point $(x_0,y_0) \in E(\Fp)$.
Then by equation \ref{equ:multi}, we have $x=((x_0^2+m)/(2y))^2$.
Hence $x$ is a square in $\Fp$, which contradicts the assumption that $x$ is a quadratic non-residue modulo $p$.
So $Q=(x,y) \notin 2E(\Fp)$.

Therefore $t$ is odd and then $Q$ has order divisible by $2^k$.

\end{proof}

\section{Primality test for $p=2^kn-1$ when $n$ is small.}

Using Theorem \ref{thm:main}, we give  primality tests for integers of the form $p=2^kn-1$, where $k$, $n \in \Z$, $k \geq 2$ and $n$ is an odd integer.
There are two primality tests.
We distinguish them by the relative size of $n$ when compared with $2^k$.
First, let us discuss the case when $n$ is relatively small.

\begin{theorem}\label{thm:algo}
Fix $\lambda>1$. 
Suppose $p=2^kn-1$ with $k\geq 2$ and  an odd integer $n \leq \sqrt{p}/\lambda$.
Assume $p$ is not so small.
More precisely, assume $p$ satisfies $\lambda \sqrt{p} > (p^{1/4}+1)^2$.
Let $E$ be a curve defined by $y^2=x^3-mx$, where $m$ is a quadratic non-residue module $p$.
Then $p$ is prime if and only if 
there exists a point $ Q=(x,y)$ on  $E$ such that
\[\gcd(S_i,p)=1\]
for $i=1$, $2$, $\ldots$, $k-1$ and 
\[S_k\equiv0 \pmod p, \]
where   $S_i$ is a sequence with the initial value $S_0=x$.
\end{theorem}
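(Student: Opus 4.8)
The plan is to prove both implications by translating the conditions on the sequence $S_i$ into a statement about the order of $Q$. The key observation is that, since $y^2=x^3-mx$, the doubling denominator is $S_i=4(x_{i-1}^3-mx_{i-1})=(2y_{i-1})^2$, where $2^{i-1}Q=(x_{i-1},y_{i-1})$. Hence, for any prime $\ell$ not dividing $m$ for which the doublings $Q,2Q,\dots,2^{k-1}Q$ are defined in $E(\F_\ell)$ (equivalently $S_1,\dots,S_{k-1}$ are units modulo $\ell$), we have $S_i\equiv 0\pmod\ell$ exactly when $y_{i-1}\equiv 0$, i.e. exactly when $2^{i-1}Q$ is a nonzero $2$-torsion point, which is the same as $2^iQ=O$ while $2^{i-1}Q\neq O$ (here $O$ is the point at infinity). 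Consequently the conditions $\gcd(S_i,\ell)=1$ for $i=1,\dots,k-1$ together with $S_k\equiv 0\pmod\ell$ hold if and only if $Q$ has order exactly $2^k$ in $E(\F_\ell)$: the coprimality keeps $Q,2Q,\dots,2^{k-2}Q$ off the $2$-torsion, while $S_k\equiv 0$ forces $2^{k-1}Q$ to have order $2$. I will apply this observation twice, once with $\ell=p$ and once with a hypothetical prime factor $q$ of $p$.

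For the forward implication, assume $p$ is prime. By Theorems~\ref{thm:p+1} and~\ref{thm:group-structure}, $\EFp\cong\Z_{2^kn}$ is cyclic of order $2^kn$ with $n$ odd. Taking a generator $G$ and setting $Q=nG$ yields a point of order exactly $2^k$; since $2^k\geq 4$ this point is affine, so $Q=(x,y)$. For $i=1,\dots,k-1$ the point $2^{i-1}Q$ has order $2^{k-i+1}\geq 4$, hence is not $2$-torsion and $y_{i-1}\neq 0$, giving $\gcd(S_i,p)=1$; and $2^{k-1}Q$ has order $2$, so $y_{k-1}=0$ and $S_k\equiv 0\pmod p$. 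Thus $Q$ is the required point, and by the key observation no verification beyond reading off the orders is needed.

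For the converse, suppose such a $Q$ exists and, toward a contradiction, that $p$ is composite. Let $q$ be a prime divisor of $p$ with $q\leq\sqrt{p}$. Then $q$ is odd, and since the Jacobi symbol $\left(\frac{m}{p}\right)=-1$ forces $\gcd(m,p)=1$, the prime $q$ does not divide $m$, so $E(\F_q)$ is a genuine elliptic curve group. Because $q\mid p$, the hypotheses $\gcd(S_i,p)=1$ give $\gcd(S_i,q)=1$ for $i=1,\dots,k-1$, and $S_k\equiv 0\pmod p$ gives $S_k\equiv 0\pmod q$. By the key observation applied with $\ell=q$, the point $Q$ has order exactly $2^k$ in $E(\F_q)$, so $2^k\mid\#E(\F_q)$ and in particular $\#E(\F_q)\geq 2^k$. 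On the one hand $2^k=(p+1)/n\geq\lambda(p+1)/\sqrt{p}>\lambda\sqrt{p}$; on the other hand the Hasse bound and $q\leq\sqrt{p}$ give $\#E(\F_q)\leq(\sqrt{q}+1)^2\leq(p^{1/4}+1)^2$. Combining, $\lambda\sqrt{p}<2^k\leq(p^{1/4}+1)^2$, contradicting the standing hypothesis $\lambda\sqrt{p}>(p^{1/4}+1)^2$. Hence $p$ has no prime factor $\leq\sqrt{p}$ and is prime. The main obstacle here is justifying that the sequence $S_i$, computed a priori only modulo $p$, descends correctly to $E(\F_q)$: one must check that the coprimality conditions $\gcd(S_i,q)=1$ make each doubling formula invertible modulo $q$, so that reducing the mod-$p$ computation modulo $q$ reproduces the intrinsic doublings in $E(\F_q)$ and legitimizes the conclusion $2^k\mid\#E(\F_q)$. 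Once this reduction is secured, the Hasse bound and the size hypotheses close the argument at once.
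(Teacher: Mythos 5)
Your proof is correct and follows essentially the same route as the paper: the forward direction uses the cyclic structure $E(\Fp)\cong\Z_{2^kn}$ (Theorem \ref{thm:group-structure}) to produce a point of order $2^k$, and the converse reduces modulo a prime divisor $r\leq\sqrt{p}$, deduces that $Q$ has order $2^k$ in $E(\F_r)$, and contradicts the Hasse bound via the chain $\lambda\sqrt{p}<2^k\leq\#E(\F_r)\leq(p^{1/4}+1)^2$. Your treatment is in fact somewhat more careful than the paper's, notably the explicit identity $S_i=(2y_{i-1})^2$ linking the gcd conditions to $2$-torsion, and the verification that $\gcd(m,r)=1$ so the reduced curve is nonsingular — details the paper's proof passes over silently.
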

\begin{proof}
Suppose $p$ is prime. 
Then by Theorem \ref{thm:group-structure}, $E(\Fp) \cong  \Z_{2^kn}$.
Then $\EFp$ has a point $Q=(x,y)$ of order $2^k$. 
Hence $S_i$, with the initial value this $x$, satisfies the conditions of the theorem since $S_i$ is the denominator of $2^iQ$.

Conversely, suppose there exists $Q$ which satisfies the conditions. 
Assume $p$ is composite and let $r$ be a prime divisor such that $r \leq \sqrt{p}$.
Then we have $\gcd(S_i,r)=1$ for $i=1$, $2$, $\ldots$, $k-1$ and $S_k\equiv0 \pmod r$.
Hence in the reduction $E(\F_r)$, $Q$ has an order $\geq 2^k$.
Using the condition on $n$, we have
\[ \lambda \sqrt{p} \leq p/n <2^k \leq \#E(\F_r) \leq (\sqrt{r}+1)^2 \leq (p^{1/4}+1)^2\]
Here the third inequality is by Hasse's Theorem.
However, we assumed that this does not happen. 
Therefore $p$ is prime.

\end{proof}

To make Theorem \ref{thm:algo} into a primality test, we need to find a point $Q$ in the theorem.
To this end we use Theorem \ref{thm:main}.
Let us first state the algorithm.

\textbf{Algorithm.}
Let $p$ be an integer of the form $p=2^kn-1$ with $k\geq2$ and $p$, $n$ satisfy the conditions of Theorem \ref{thm:algo}.
To check whether $p$ is prime, do the following steps.

\begin{enumerate}
	\item Take $x \in \Z$ such that  $\left(  \frac{x}{p}\right)=-1$ and find $y$ such that 
         	 $\left(\frac{x^3-y^2}{p} \right)=1$.
	        Let $m=(x^3-y^2)/x$ mod $p$.
         	Then $Q'=(x,y)$ lies on the curve $E: y^2=x^3-mx$, where $m \not \equiv 0 \pmod p $.
	         The following calculation is done in $E(\Z_p)$.
        	Let $Q=nQ'$. 
        	If $Q=\infty$, then $p$ is composite. 
	        If not, go to Step 2.
	\item Let $S_i$ be the sequence with the inital value $Q$. 
	      Calculate $S_i$ for $i=1$, $2$, $\ldots$, $k-1$ .
	      If $\gcd(S_i,p)>1$ for some $i$, $1\leq i \leq k-1$, then $p$ is composite.
	      If $\gcd(S_i,p)=1$ for $i=1$, $2$, $\ldots$, $k-1$, then go to Step 3.
	\item If $S_k\equiv0 \pmod p$, then $p$ is prime.
      	If not, $p$ is composite.

\end{enumerate}

Let us check why this algorithm works.
In Step 1, we find an elliptic curve $E:y^2=x^3-mx$ and a point $Q$ on $E$ whose $x$-coordinate is a quadratic non-residue.
We have $\left(\frac{m}{p} \right)= \left( \frac{(x^3-y^2)/x}{p} \right)= \left(\frac{x}{p}\right)\left(\frac{x^3-y^2}{p}\right)= -1\cdot1=-1$.
Hence if $p$ is prime, then $Q'$ has order divisible by $2^k$ by Theorem \ref{thm:main}. 
So the order of $Q'$ is $2^kd$, where $d|n$.
Hence $Q=nQ'$ has order $2^k$. Therefore if Step 1 concludes that $p$ is composite, then $p$ is really composite.
Step 2 and Step 3 check  if $Q$ has order $2^k$.
So if Step 2 or Step 3 concludes that $p$ is composite, then $p$ is really composite.
If the algorithm concludes $p$ is prime, then $S_1$ satisfies the conditions of Theorem \ref{thm:algo}.
Therefore $p$ is really prime.

\begin{remark}
Since we know both coordinates of $Q$, we can calculate $nQ$ quickly.
\end{remark}
\begin{remark}
Suppose this test concludes that $p$ is composite because $\gcd(S_i, p)>1$ for some $i$, $1\leq i \leq k-1$ in Step 2.
Then $\gcd(S_i, p)$ might be a proper divisor of $p$ though it might be $p$ itself.
This is the basic idea of the primality testing using elliptic curves proposed by Goldwasser and Kilian (see \cite{Goldwasser-Kilian}).
\end{remark}

\section{Primality test for Mersenne numbers.}

Let us apply the above algorithm for Mersenne numbers $M_k=2^k-1$.
That is, we take $n=1$ and suppose $k\geq3$.
 In this case we do not have to choose the initial value and the elliptic curve as in Step 1.
Note that since $n=1$, the algorithm contains no elliptic curve calculation.
Since $S_i$ can be calculated using only the $x$-coordinate, we do not need to find $y$.
Actually, we can take $E:y^2=x^3-3x$ and a point $Q$ with the $x$-coordinate $-1$.
Let us check this.
Suppose $M_k$ is prime.
Since $M_k\equiv 3 \pmod 4$, we have $\left(\frac{3}{M_k}\right)=-\left(\frac{M_k}{3}\right)=-1$ by the quadratic reciprocity low.
Hence we can take $m=3$.
Next, since $M_k\equiv -1\pmod 8$, we have $\left(\frac{(-1)^3-3(-1)}{M_k}\right)=\left(\frac{2}{M_k}\right)=1$.
Hence $\sqrt{2}\in \F_{M_k}$.
Therefore $Q=(-1, \sqrt{2})\in E(\F_{M_k})$.

In summary, the primality test for Mersenne numbers is the following.

\textbf{Algorithm for Mersenne numbers.}

Let $p=2^k-1$, $k \geq 3$.
Let $x_0=-1$, $x_{i+1}=\frac{x_i^4+6x_i^2+9}{4(x_i^3-3x_i)}$ modulo $p$ for $i \geq 0$.
Define $S_i=x_{i-1}^3-3x_{i-1}$ modulo $p$ for $i\geq 1$.

To check the primality, do the following steps.
\begin{enumerate}
	\item Calculate $S_i$ for $i=1$, $2$, \ldots, $k-1$ .
	If $\gcd(S_i,p)>1$ for some $i$, $1\leq i \leq k-1$, then $p$ is composite.
	If $\gcd(S_i,p)=1$ for $i=1$, $2$, \ldots, $k-1$, then go to Step 2.
	\item If $S_{k}\equiv0 \pmod p$, then $p$ is prime.
	If not, $p$ is composite.
\end{enumerate}

Therefore, we get a primality test which is an analogue of the Lucas-Lehmer test.

\begin{remark}
Note that for Mersenne numbers, the algorithm concludes that $p$ is composite if and only if $\gcd(S_i,p)>1$ for some $i$, $1\leq i \leq k-1$.
Hence as mentioned above, it might find a proper divisor of $p$ as a value of $\gcd(S_i,p)$.
\end{remark}

\section{Primality test for $p=2^kn-1$ when $n$ is large.} 

Next, let us consider the case when $n$ is relatively large.
For this case, we assume $n=q$ is prime for simplicity.

\textbf{ Algorithm.}
Let $p=2^kq-1$ with $k\geq 2$ and $q$ prime.
Fix $\lambda >1$ and assume $2^k\lambda \leq \sqrt{p}$ and $\lambda\sqrt{p}>(p^{1/4}+1)^2$.

To check if $p$ is prime or not, do the following steps.
\begin{enumerate}
	\item Take $x \in \Z$ such that  $\left(  \frac{x}{p}\right)=-1$ and find $y$ such that 
	 $\left(\frac{x^3-y^2}{p}\right)=1$. 
	Let $m=(x^3-y^2)/x$ mod $p$.
	Then $Q=(x,y)$ lies on the curve $E: y^2=x^3-mx$. 
	Then the following calculation is done in $E(\Z_p)$.
	\item If $2^kQ=\infty$, then go to Step 1 and take another $y$. If $2^kQ \neq \infty$, then go to Step 3.
	\item If $q(2^kQ)=\infty$, then $p$ is prime. If not, $p$ is composite.

\end{enumerate}
\begin{theorem}\label{thm:check}
	If we reach Step 3 in the above algorithm, it determines whether or not $p$ is prime.
\end{theorem}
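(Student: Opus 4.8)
The plan is to show that, once we have reached Step 3 (so that $2^kQ\neq\infty$ has already been verified in $E(\Z_p)$), the Step-3 test $q(2^kQ)\stackrel{?}{=}\infty$ reports ``prime'' exactly when $p$ is prime. This amounts to two implications, and I would handle them separately using the structure theorems. First I would record that the setup is consistent: since $k\geq 2$ we have $p=2^kq-1\equiv 3\pmod 4$, and the choices in Step 1 give $\left(\frac{m}{p}\right)=\left(\frac{x}{p}\right)\left(\frac{x^3-y^2}{p}\right)=(-1)(1)=-1$, so $m$ is a quadratic non-residue with $m\not\equiv 0\pmod p$.

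For the forward implication, suppose $p$ is prime. By Theorem \ref{thm:group-structure} the group $E(\Fp)\cong\Z_{2^kq}$ is cyclic, and because $x$ is a quadratic non-residue Theorem \ref{thm:main} shows the order of $Q$ is divisible by $2^k$; hence it equals $2^kd$ with $d\mid q$, and since $q$ is prime $d\in\{1,q\}$. Reaching Step 3 means $2^kQ\neq\infty$, i.e.\ the order does not divide $2^k$, which forces $d=q$ and order exactly $2^kq$. Then $2^kQ$ has order $q$, so $q(2^kQ)=\infty$ and the algorithm correctly reports ``prime.''

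For the reverse implication I would argue by contradiction: assume the algorithm reports ``prime,'' so $q(2^kQ)=\infty$ while $2^kQ\neq\infty$, yet $p$ is composite, and fix a prime factor $r\le\sqrt p$ of $p$. The key reduction step is that, having reached Step 3, every inversion used to compute $2^kQ$ and to check $q(2^kQ)=\infty$ succeeded in $E(\Z_p)$, so every denominator is coprime to $p$ and hence to $r$; the same doublings and additions therefore go through in $E(\F_r)$ and commute with reduction mod $r$, yielding $2^k\bar Q\neq\infty$ and $2^kq\,\bar Q=\infty$ in $E(\F_r)$ for the reduction $\bar Q$ of $Q$. (Should an inversion instead have failed, it would exhibit a nontrivial factor of $p$, again confirming that $p$ is composite.) Thus the order of $\bar Q$ divides $2^kq$ but not $2^k$; as $q$ is prime, the only such divisors are $2^iq$, so $q$ divides the order and the order is at least $q$. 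Now the size hypotheses close the argument: from $2^k\lambda\le\sqrt p$ one gets
\[ q=\frac{p+1}{2^k}\ge\frac{\lambda(p+1)}{\sqrt p}>\lambda\sqrt p>(p^{1/4}+1)^2\ge(\sqrt r+1)^2\ge\#E(\F_r), \]
where the last two inequalities are $r\le\sqrt p$ and Hasse's theorem. This contradicts Lagrange's theorem, which requires the order of $\bar Q$ to divide $\#E(\F_r)$. Hence $p$ is prime, and combined with the forward implication the Step-3 determination is always correct.

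The part I expect to require the most care is the reduction-mod-$r$ step in the reverse implication: making precise that a computation carried out over the ring $\Z_p$ (where $E(\Z_p)$ is not genuinely a group) reduces coordinate-wise to the corresponding computation over the field $\F_r$, so that $2^kQ\neq\infty$ transfers to $2^k\bar Q\neq\infty$ and $q(2^kQ)=\infty$ transfers to $2^kq\,\bar Q=\infty$, while simultaneously accounting for the case in which an inversion fails and directly reveals a factor of $p$. Everything after that is the same order-versus-Hasse bookkeeping used in Theorem \ref{thm:algo}, with the hypotheses $2^k\lambda\le\sqrt p$ and $\lambda\sqrt p>(p^{1/4}+1)^2$ calibrated exactly to force the final contradiction.
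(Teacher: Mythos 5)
Your proposal is correct and follows essentially the same route as the paper: the forward direction uses Theorems \ref{thm:group-structure} and \ref{thm:main} to pin down the order of $Q$ as exactly $2^kq$, and the reverse direction derives a contradiction from a prime divisor $r\le\sqrt p$ via the chain of inequalities combining $2^k\lambda\le\sqrt p$, Lagrange's theorem in $E(\F_r)$, Hasse's bound, and the hypothesis $\lambda\sqrt p>(p^{1/4}+1)^2$. The only difference is that you spell out the reduction-mod-$r$ step (that the computations in $E(\Z_p)$ descend to $E(\F_r)$, with failed inversions exhibiting a factor of $p$), a detail the paper's proof passes over silently; this is a welcome clarification rather than a different argument.
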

\begin{proof}
We have $\left(\frac{m}{p}\right)=\left(\frac{(x^3-y^2)/x}{p}\right)=\left(\frac{x}{p}\right)\left(\frac{x^3-y^2}{p}\right)=-1\cdot1=-1$.
If $p$ is prime, then by Theorem \ref{thm:group-structure} we have $\EFp \cong \Z_{2^kq}$.
Since the $x$-coordinate of $Q$ is a quadratic non-residue, the order of $Q$ is divisible by $2^k$  by Theorem \ref{thm:main}.
By Step 2, we know that $2^kQ\neq\infty$. Hence $Q$ has order $2^kq$.
So if $2^kqQ \neq \infty$, then $p$ is not prime.

Suppose we have $q(2^kQ)=\infty$ in Step 3 and $p$ is composite. 
Let $r$ be a prime divisor of $p$ such that $r \leq \sqrt{p}$.
Since $2^kQ \neq \infty$ and $q(2^kQ)=\infty$, $Q$ has order divisible by $q$.
Using the assumption on $k$, we have
\[\lambda \sqrt{p}\leq p/2^k < q \leq \#E(\F_r)\leq(\sqrt{r}+1)^2 \leq (p^{1/4}+1)^2.\]
Here the third inequality is by the Hasse's Theorem.
However, we assumed this inequality does not hold.
Hence $p$ is prime.
\end{proof}

\begin{remark}
Since we know  $Q=(x,y)$, we can use the method of successive doubling when we multiply integers.
Hence it is calculated quickly.
\end{remark}
\begin{remark}
If we cannot proceed to Step 3, then this test will not stop.
However, if $q$ is large prime, then it is likely that $Q$ has order $2^kq$.
So after doing Step 2 several times if  we could not proceed to Step 3, then it is likely $p$ is composite.
Then we need to use another test to check if it is really composite.
Or we should use this test after checking that $p$ is a probably prime by another test.
\end{remark}

There exists a similar algorithm when $n$ is not prime.
However, the number of steps in the algorithm will increase.
To see what happens, let us consider the case when $n$ is a product of two primes.
Let $n=q_1q_2$, where $q_1$, $q_2$ are (not necessarily distinct) primes.
 
 \textbf{ Algorithm.}
Let $p=2^kq_1q_2-1$, where $k\geq 2$ and $q_1$, $q_2$ are primes.
Fix $\lambda >1$ and assume $2^k\lambda \leq \sqrt{p}$ and $\lambda\sqrt{p}>(p^{1/4}+1)^2$.

To check if $p$ is prime or not, do the following steps.
\begin{enumerate}
	\item Take $x \in \Z$ such that  $\left(  \frac{x}{p}\right)=-1$ and find $y$ such that 
	 $\left(\frac{x^3-y^2}{p}\right)=1$. 
	Let $m=(x^3-y^2)/x$ mod $p$.
	Then $Q=(x,y)$ lies on the curve $E: y^2=x^3-mx$. 
	Then the following calculation is done in $E(\Z_p)$.
	\item If $2^kQ=\infty$, then go to Step 1 and take another $y$. 
	If $2^kQ \neq \infty$, then go to Step 3.
	\item If $q_1(2^kQ)\neq\infty$ and  $q_2(2^kQ)\neq \infty$, then go to Step 4.
	Otherwise, go to Step 1 and take another $y$.
	\item If $q_1q_2(2^kQ)=\infty$, then $p$ is prime. 
	If not, $p$ is composite.

\end{enumerate}
 
 The proof is almost the same as that of Theorem \ref{thm:check}.
 You can replace $q$ in the proof of  Theorem \ref{thm:check} by $q_1q_2$.
 
 \begin{remark}
 
 These tests in this section correspond to the primality tests using the factors of $p+1$.
 (See \cite{Crandall-Pomerance}).
 \end{remark}
\bibliographystyle{amsplain}

\end{document}